 \theoremstyle{plain}
  \newtheorem{thm}{Theorem}[section]
  \newtheorem{prop}[thm]{Proposition}
  \newtheorem{defi}[thm]{Definition}
\newcommand{\tn}{\widetilde{\nabla}_{n} }
\newcommand{\Z}{{\mathbb{Z}}}
\newcommand{\re}{{\mathbb{R}}}
\newcommand{\vc}[1]{{\mathbf #1}}
\newcommand{\blah}[1]{}
\newcommand{\Ent}{{\rm Ent}}
\begin{document}

\title{A natural derivative on $[0,n]$ and a binomial Poincar\'{e} inequality}
\author{Erwan~\textsc{Hillion}\thanks{
Institut de Math\'{e}matiques,
Rue Emile-Argand 11,
Case Postale 2,
CH-2007 Neuch\^{a}tel Suisse. Email {\tt erwan.hillion@unine.ch}}
\and
 Oliver~\textsc{Johnson}\thanks{Statistics Group, Department of Mathematics, 
University of Bristol, University Walk, Bristol, BS8 1TW, UK. 
Email {\tt o.johnson@bris.ac.uk}} 
\and Yaming~\textsc{Yu}\thanks{Department of Statistics, University 
of California, Irvine, CA 92697, USA. Email {\tt yamingy@uci.edu}}}
\date{\today}

\maketitle

\begin{abstract} We consider probability measures supported on
a finite discrete interval $[0,n]$. We introduce a new finite
difference operator $\nabla_n$, defined as a linear combination of left
and right finite differences. We show that this operator $\nabla_n$ plays a key role in a 
new Poincar\'{e} (spectral gap) inequality with respect to binomial weights, with the
orthogonal Krawtchouk polynomials acting as eigenfunctions of the relevant
operator. We briefly discuss the relationship of this operator to the problem of
optimal transport of probability measures.
\end{abstract}

{\bf 2010 Mathematics Subject Classification}: 46N30 (primary); 60E15 (secondary)

\section{Introduction and main results}

Many results in functional analysis are better understood in the context
of continuous spaces than discrete.
One reason that the real-valued case is more tractable than integer-valued
problems is the existence
of a spatial derivative $\frac{\partial}{\partial x}$,  well-defined
in the sense that the left and right derivatives coincide for a large
class of functions. However, the 
situation is more complicated for integer-valued functions $f$. There
exist two competing derivatives $\nabla^l$ and $\nabla^r$,  
defined as $\nabla^l f(k) = f(k) - f(k-1)$ and $\nabla^r f(k) = f(k+1) - f(k)$,
which are adjoint with respect to
counting measure on $\Z$.
In this paper, we define a new finite difference operator for
functions on $[0,n]$, which interpolates
between $\nabla^l$  and $\nabla^r$.
\begin{defi} \label{def:nabla}
Fix an integer $n \geq 1$, and denote by 
$\nabla_n$ the finite difference operator defined by 
\begin{eqnarray}(\nabla_n f) (k) & = &
\frac{k}{n} (\nabla^l f)(k) + \frac{n-k}{n} (\nabla^r f)(k) \nonumber \\
& = & \frac{k}{n} 
(f(k)-f(k-1))+\frac{n-k}{n} (f(k+1)-f(k))
\label{NablaDef} .\end{eqnarray}
\end{defi}
We will argue that this operator has certain desirable properties,
and as such deserves further attention. In particular, we will
show that in two senses 
it is a natural choice of derivative in relation to
binomial measures $b_{n,t}(k) = \binom{n}{k} t^k (1-t)^{n-k}$.

Firstly, in Section \ref{sec:translate}, we will show that this operator
$\nabla_n$ acts like the translation operator on the real line. That is, in Equation
(\ref{HeuriEDP}) below, we describe how a probability measure $\mu$ on $\re$ can
be smoothly translated using a sequence of intermediate measures $\mu_t$. Equation (\ref{HeuriEDP})
describes the effect of this translation action through its effect on arbitrary test functions
$f$.
 We prove the following theorem, which acts as a discrete counterpart of (\ref{HeuriEDP}), with
the relationship between measure $b_{n,t}$ and operator $\nabla_n$ playing a key role:
\begin{thm} \label{thm:translate}
The operator $\nabla_n$ gives a smooth translation of 
point masses from point 0 to point $n$ using the binomial measures $b_{n,t}$
in that
\begin{enumerate}
\item
$b_{n,t}$ satisfies the initial condition $b_{n,0} = \delta_0$ and the 
final condition $b_{n,1} = \delta_n.$ 
\item For every function $f: \Z \rightarrow \re$, 
\begin{equation}\label{EqTranslation2} 
\frac{\partial}{\partial t} \sum_{k \in \Z}    f(k) b_{n,t}(k)
 = n \sum_{k \in \Z} (\nabla_n f)(k) b_{n,t}(k) .\end{equation}
\end{enumerate}
\end{thm}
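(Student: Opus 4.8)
The plan is to handle the two parts separately, with part~1 being immediate and part~2 reducing, after one classical identity for $\partial_t b_{n,t}$, to an elementary ``integration by parts against binomial weights'' computation. For part~1, setting $t=0$ in $b_{n,t}(k)=\binom{n}{k}t^k(1-t)^{n-k}$ kills every term except $k=0$, where $b_{n,0}(0)=1$; symmetrically, setting $t=1$ leaves only $k=n$ with $b_{n,1}(n)=1$. Hence $b_{n,0}=\delta_0$ and $b_{n,1}=\delta_n$.

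For part~2, since $b_{n,t}$ is supported on the finite set $\{0,\dots,n\}$ the sum $\sum_k f(k)b_{n,t}(k)$ is a finite sum of smooth functions of $t$, so I may differentiate term by term. First I would record the identity
\[
\frac{\partial}{\partial t} b_{n,t}(k) = n\bigl(b_{n-1,t}(k-1) - b_{n-1,t}(k)\bigr),
\]
which follows from $\frac{\partial}{\partial t} b_{n,t}(k) = \binom{n}{k}\bigl(k\,t^{k-1}(1-t)^{n-k} - (n-k)\,t^k(1-t)^{n-k-1}\bigr)$ together with the Pascal-type identities $n\binom{n-1}{k-1}=k\binom{n}{k}$ and $n\binom{n-1}{k}=(n-k)\binom{n}{k}$. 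Summing against $f$ and shifting the index $k\mapsto k+1$ in the first resulting sum gives
\[
\frac{\partial}{\partial t}\sum_k f(k)\,b_{n,t}(k) = n\sum_k \bigl(f(k+1)-f(k)\bigr)\,b_{n-1,t}(k) = n\sum_k (\nabla^r f)(k)\,b_{n-1,t}(k).
\]

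It then remains to check that $\sum_k (\nabla_n f)(k)\,b_{n,t}(k) = \sum_k (\nabla^r f)(k)\,b_{n-1,t}(k)$. Here I would expand $\nabla_n f$ as in Definition~\ref{def:nabla} and use the elementary weight identities $k\,b_{n,t}(k) = nt\,b_{n-1,t}(k-1)$ and $(n-k)\,b_{n,t}(k) = n(1-t)\,b_{n-1,t}(k)$, both again immediate from the coefficient identities above. The $\frac{k}{n}\nabla^l$ term then contributes $t\sum_k (\nabla^r f)(k)\,b_{n-1,t}(k)$ after a shift of index, while the $\frac{n-k}{n}\nabla^r$ term contributes $(1-t)\sum_k (\nabla^r f)(k)\,b_{n-1,t}(k)$; adding them, the weights $t$ and $1-t$ combine to $1$, which is exactly the desired identity. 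Comparing with the previous display yields (\ref{EqTranslation2}).

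There is no serious obstacle: the only points that need care are (i)~justifying differentiation under the (finite) sum, and (ii)~bookkeeping in the index shifts — in particular noting that, although the sums are formally over $\Z$, the weights $b_{n,t}$ and $b_{n-1,t}$ vanish outside $\{0,\dots,n\}$ and $\{0,\dots,n-1\}$ respectively, so no boundary terms arise. One could alternatively appeal to linearity and check the identity only for $f(k)=\binom{k}{j}$, $j\geq 0$, but the direct computation above appears shortest.
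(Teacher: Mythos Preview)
Your proof is correct and takes a genuinely different route from the paper's. The paper works on the measure side: it verifies the adjoint equation $\partial_t b_{n,t}(k) = n(\nabla^*_n b_{n,t})(k)$ by computing both sides as $b_{n,t}(k)\bigl(k/t - (n-k)/(1-t)\bigr)$, invoking the conjugation identity $\tn = D^{-1}\circ \nabla^*_n \circ D$ of Definition~\ref{def:furthermaps} together with the computation $n\,\tn \vc{1}(k) = k/t - (n-k)/(1-t)$. You instead stay on the function side, reducing everything to the classical identity $\partial_t b_{n,t}(k) = n\bigl(b_{n-1,t}(k-1) - b_{n-1,t}(k)\bigr)$ and then matching $\sum_k (\nabla_n f)(k)\,b_{n,t}(k)$ with $\sum_k (\nabla^r f)(k)\,b_{n-1,t}(k)$ via the weight identities $k\,b_{n,t}(k) = nt\,b_{n-1,t}(k-1)$ and $(n-k)\,b_{n,t}(k) = n(1-t)\,b_{n-1,t}(k)$. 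Your argument is more elementary and self-contained --- it never divides by $t$ or $1-t$, so it is valid verbatim for all $t\in[0,1]$ without separate boundary checks --- whereas the paper's approach has the virtue of exercising exactly the operators $\nabla^*_n$ and $\tn$ that are reused in Section~\ref{sec:poincare} for the Poincar\'e inequality.
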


Secondly, in Proposition \ref{prop:ladder} below we
 will show that the map $\nabla_n$ and its adjoint $\tn$ (with respect to
binomial weights)
act as ladder operators for the Krawtchouk polynomials $\phi_r$ (see Theorem
\ref{thm:krawt}). 
This allows us to describe the spectrum of the map
$\left( \tn \circ \nabla_n \right)$, with
 $\phi_r$ being eigenfunctions with eigenvalue $\frac{r(n-r+1)}{n^2 t(1-t)}$.
In particular, taking the smallest non-zero eigenvalue leads to a Poincar\'e (spectral
gap)
inequality for the binomial 
law, using the natural derivative operator $\nabla_n$, and gives the case of
equality.
\begin{thm}\label{PoincBino}
Fix $t \in (0,1)$ and consider function $f : \{0,\ldots n\} \rightarrow \re$ 
satisfying $\sum_{k=0}^n f(k) b_{n,t}(k)=0.$ Then
\begin{equation} \label{eq:eigen} 
  \sum_{k=0}^n b_{n,t}(k) f(k)^2 
\leq n t(1-t) \sum_{k=0}^n b_{n,t}(k) \left( \nabla_n f(k) \right)^2. \end{equation} 
Equality holds if and only if $f$ is a linear combination of 
$\phi_1(k) = \frac{1}{1-t}(k-nt)$ and 
$\phi_n(k)=n!\left(\frac{-t}{1-t}\right)^{n-k}.$
\end{thm}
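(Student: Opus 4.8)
The plan is to diagonalise the operator $\tn \circ \nabla_n$ in the orthogonal basis of Krawtchouk polynomials and then reduce (\ref{eq:eigen}) to an elementary inequality for its eigenvalues. Write $\langle g,h\rangle = \sum_{k=0}^n b_{n,t}(k) g(k) h(k)$ for the inner product weighted by the binomial law, $\|g\|^2 = \langle g,g\rangle$, and let $\phi_0,\phi_1,\ldots,\phi_n$ be the Krawtchouk polynomials of Theorem \ref{thm:krawt}, with $\phi_r$ of degree $r$ and $\phi_0$ a non-zero constant. Since there are $n+1$ of them, of distinct degrees, and they are mutually orthogonal for $\langle\cdot,\cdot\rangle$, they form a basis of the space of all real functions on $\{0,\ldots,n\}$, with $\|\phi_r\|^2>0$. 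From Proposition \ref{prop:ladder} I will use two facts: (i) $\tn$ is the adjoint of $\nabla_n$, i.e. $\langle \nabla_n g, h\rangle = \langle g, \tn h\rangle$ for all $g,h$ on $\{0,\ldots,n\}$ --- note that no boundary terms appear, because $(\nabla_n g)(0) = g(1)-g(0)$ and $(\nabla_n g)(n) = g(n)-g(n-1)$ involve only values in $\{0,\ldots,n\}$ --- and (ii) $(\tn\circ\nabla_n) \phi_r = \lambda_r \phi_r$ with $\lambda_r = \frac{r(n-r+1)}{n^2 t(1-t)}$.

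Next, expand the given function as $f = \sum_{r=0}^n a_r \phi_r$. The hypothesis $\sum_{k=0}^n f(k) b_{n,t}(k) = 0$ is, up to the non-zero factor $\phi_0$, the statement $\langle f, \phi_0\rangle = 0$, which by orthogonality forces $a_0 = 0$. Then orthogonality gives
\[
\sum_{k=0}^n b_{n,t}(k) f(k)^2 = \langle f,f\rangle = \sum_{r=1}^n a_r^2 \|\phi_r\|^2 ,
\]
while applying fact (i) with $g=f$, $h=\nabla_n f$ and then fact (ii) gives
\[
\sum_{k=0}^n b_{n,t}(k) \big(\nabla_n f(k)\big)^2 = \langle \nabla_n f, \nabla_n f\rangle = \langle f, (\tn\circ\nabla_n) f\rangle = \sum_{r=1}^n a_r^2 \lambda_r \|\phi_r\|^2 .
\]
Since $n t(1-t)\lambda_r = r(n-r+1)/n$, inequality (\ref{eq:eigen}) is therefore equivalent to $\sum_{r=1}^n a_r^2 \|\phi_r\|^2 \le \sum_{r=1}^n a_r^2 \frac{r(n-r+1)}{n}\|\phi_r\|^2$, which holds term by term once we know $r(n-r+1)\ge n$ for every integer $r\in\{1,\ldots,n\}$.

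That last claim is the only genuinely new computation: $r\mapsto r(n-r+1)$ is a downward parabola taking the value $n$ at $r=1$ and at $r=n$, hence is $\ge n$ on the whole interval $[1,n]$, with strict inequality for $2\le r\le n-1$. Granting Proposition \ref{prop:ladder}, the rest of the argument is routine; the substantive points, all settled there, are that $\nabla_n$ and $\tn$ are genuinely adjoint on the finite interval with no surviving boundary terms and that the eigenvalues are exactly $\lambda_r$, and I expect verifying the latter --- via the ladder relations for the $\phi_r$ --- to be where the real work sits. For the equality case, since $\|\phi_r\|^2>0$ and $r(n-r+1)>n$ strictly when $2\le r\le n-1$, equality in (\ref{eq:eigen}) forces $a_r=0$ for all such $r$; together with $a_0=0$ this says exactly that $f$ is a linear combination of $\phi_1$ and $\phi_n$, and inserting the explicit normalisations from Theorem \ref{thm:krawt} identifies these as $\phi_1(k) = \frac{1}{1-t}(k-nt)$ and $\phi_n(k)=n!\left(\frac{-t}{1-t}\right)^{n-k}$.
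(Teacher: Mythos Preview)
Your proof is correct and follows essentially the same route as the paper: expand $f$ in the orthogonal Krawtchouk basis, use the adjointness of $\nabla_n$ and $\tn$ together with the eigenvalue identity $(\tn\circ\nabla_n)\phi_r=\frac{r(n-r+1)}{n^2t(1-t)}\phi_r$ from Proposition~\ref{prop:ladder}, and reduce the inequality to $r(n-r+1)\ge n$ on $\{1,\ldots,n\}$ with equality exactly at $r=1,n$. One cosmetic point: the adjointness you label ``fact (i)'' is actually the definition of $\tn$ (Definition~\ref{def:furthermaps}) rather than part of Proposition~\ref{prop:ladder}.
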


The idea of studying Poincar\'{e} inequalities with respect to discrete
distributions is not a new one.
For example, Bobkov and co-authors
\cite{bobkov2, bobkov5, bobkov, bobkov3}
give results concerning probability measures supported on
the discrete cube (with the difference $\nabla^r$
taken modulo 2). Cacoullos
\cite{cacoullos}, Chen and Lou \cite{chen5} and Klaasen 
\cite{klaasen} give results concerning
$\nabla^r$ on
$\Z$ and $\Z^n$. 
In particular, Table 2.1 of Klaassen \cite{klaasen} shows
that for Poisson mass function $\Pi_\lambda$, if $\sum_k f(k) \Pi_\lambda(k) = 
0$ then
\begin{equation} \label{eq:poincpoi} 
 \sum_{k=0}^{\infty} \Pi_{\lambda}(k) f(k)^2
\leq \lambda \sum_{k=0}^{\infty} \Pi_{\lambda}(k) \left( \nabla^r f(k) \right)^2.\end{equation}
This can be understood as a consequence of the fact that $\nabla^r$ (and its
adjoint with respect to Poisson weights $\widetilde{\nabla}^r$) act as ladder operators 
with respect to Poisson-Charlier polynomials, meaning that the Poisson-Charlier
polynomials are eigenfunctions of $\left( \widetilde{\nabla}^r \circ \nabla^r
\right).$
These results also have an analogy with the
work of Chernoff \cite{chernoff}, where the corresponding result was proved
for normal random variables, with the Hermite polynomials acting as
eigenfunctions of the corresponding map.

However, Klaassen does not deduce such a clean result for binomial weights,
requiring a weighting term on the right-hand side
\begin{equation} \label{eq:poincbin}
\sum_{k=0}^n b_{n,t}(k) f(k)^2 \leq t \sum_{k=0}^n b_{n,t}(k)(n-k) \left( \nabla^r f(k) \right)^2  \end{equation} 
We can summarise the difference between our Theorem \ref{PoincBino} and Klaassen's
Equation (\ref{eq:poincbin}) by saying that we have altered the definition of
the derivative, whereas Klaassen altered the binomial distribution in
question. Note that as $n \rightarrow \infty$ with $t n = \lambda$, 
Theorem \ref{PoincBino} converges to Equation (\ref{eq:poincpoi}).

Note that although we do not directly discuss applications here, in other settings
 the rate of convergence in
variance of reversible Markov chains can be bounded in terms of the spectral
gap (see for example \cite[Lemma 2.1.4]{saloff}). 

In general, Poincar\'{e} inequalities are often viewed as a consequence of 
log-Sobolev inequalities (see for example \cite[Lemma 2.2.2]{saloff}). In
particular, for Poisson measures $\Pi_\lambda$, Bobkov and Ledoux 
\cite[Corollary 4]{bobkov3}
prove that for any positive function $f$,
\begin{equation} \label{eq:logsob} \Ent_{\Pi_\lambda}(f) \leq
\lambda \sum_{k=0}^{\infty} \Pi_{\lambda}(k) \frac{\left( \nabla^r f(k) \right)^2}
{f(k)}, \end{equation}
and show that Klaasen's Poincar\'{e} inequality (\ref{eq:poincpoi}) can be deduced
from (\ref{eq:logsob}).  Here, $\Ent_{\nu}(f) = \sum_k \Theta(f(k)) \nu(k) -
\Theta \left( \sum_k f(k) \nu(k) \right)$, where $\Theta(t) = t \log t$.
It is natural to conjecture that an equivalent of Equation (\ref{eq:logsob})
should hold for Binomial random variables with our natural derivative $\nabla_n$, that
is
\begin{equation} \label{eq:binlogsob} \Ent_{b_{n,t}}(f) \leq
nt(1-t) \sum_{k=0}^{n} b_{n,t}(k) \frac{\left( \nabla_n f(k) \right)^2}
{f(k)}. \end{equation}
However, this result (\ref{eq:binlogsob}) is in general false. Consider for
example
$n=2$, $t=1/2$, $f(0) = f(2) = 9/10$, $f(1) = 1/10$. In this case,
$\Ent_{b_{n,t}}(f)   = 0.18403$ and the right-hand side of Equation (\ref{eq:binlogsob})
is $0.17777$, and the inequality fails. The question of natural conditions on $f$ under
which Equation (\ref{eq:binlogsob}) holds remains open.

The structure of the remainder of the paper is as follows. In
Section \ref{sec:translate}, we discuss the translation problem in $\Z$
and prove the existence of a fundamental solution for
the problem under the choice of $\nabla$ as the
$\nabla_n$ from Definition \ref{def:nabla}. 
In Section \ref{sec:poincare} we prove Proposition \ref{prop:ladder},
the key result leading to the Poincar\'{e} inequality Theorem
\ref{PoincBino}.

\section{The translation problem in $\Z$} \label{sec:translate}

It is clear that there exists an unambiguous definition of translations of
real-valued probability measures, defined as the push-forward of the translation
map. That is, let 
 $\mu$ be a probability measure on $\re$ (with its Borel $\sigma$-algebra) 
having a smooth density $\rho$ w.r.t. the Lebesgue measure $dx$. The 
$n$-translation of $\mu$, 
where $n \in \re$, is the family of measures 
$(\mu_t=\rho_t dx)_{t\in [0,1]}$, where the density $\rho_t$ is defined by 
\begin{equation}\label{HeuriPF}\forall x \in \re, \ \rho_t(x) = \rho(x-nt).
\end{equation} 
In other words, the measure $\mu_t$ is the push-forward of $\mu$ by the
translation map $T_t(x)=x+nt=(1-t)x+t(x+n)$. In particular, 
\begin{equation}\label{HeuriEDP0}\frac{\partial}{\partial t} 
\rho_t(x) = -n \frac{\partial}{\partial x} \rho_t(x).\end{equation} 
This can be generalized for non absolutely continuous probability measures, 
writing Equation (\ref{HeuriEDP0}) in the sense of distributions: 
\begin{equation}\label{HeuriEDP}
\frac{\partial}{\partial t}\ \int_\re f(x) d \mu_t(x) = 
n \int_\re \frac{\partial}{\partial x} f(x) d \mu_t(x),
\mbox{\;\;\;\; for all $f \in \mathcal{C}_c^\infty (\re)$.}
\end{equation}
This equation means that the measure $\mu_t$ is the convolution of the initial 
measure $\mu_0$ with the fundamental solution of Equation (\ref{HeuriEDP}):
\begin{equation}\label{HeuriConvo}\mu_t=\mu_0 * \delta(x-nt).\end{equation} 
Notice that this construction of $\mu_t$ allows a smooth interpolation of
probability measures. In this paper we generalize 
these heuristics to the case of probability measures on $\Z$. 

\begin{defi}
A probability measure $\mu_1$ on $\Z$ is the $n$-translation of another 
probability measure $\mu_0$ if $$ \mu_1(k+n)= \mu_0(k)
\mbox{ \;\;\;\; for all $k \in \Z$}.$$
In particular, we will consider measures that smoothly interpolate between
point masses
\begin{equation} \mu_0 = \delta_0 \mbox{\;\;\; and \;\;\;} \mu_1 = \delta_n.
\label{eq:initial}
\end{equation} 
\end{defi}
The 
non-connectedness of $\Z$ makes it impossible to generalize 
Equation (\ref{HeuriPF}) directly.
However, we will adapt the ``PDE point of view'', given in 
Equation (\ref{HeuriEDP}), to construct the $n$-translation 
of point masses (\ref{eq:initial}),  in a way that satisfies
\begin{equation}\label{EqTranslation} \frac{\partial}{\partial t}
\sum_{k \in \Z}    f(k) \mu_t(k) = 
n \sum_{k \in \Z}  \nabla f(k) \mu_t(k).\end{equation}
The main problem in this adaptation is 
to find the correct derivative operator $\nabla$ on $\Z$. 
In general, we make the following
definition:

\begin{defi}
A spatial derivative $\nabla$ on $\Z$ is a linear operator in the space 
of functions on $\Z$ that maps any function $f$ to another function $\nabla f$, 
where, for each $k \in \Z$, there exists a coefficient $\alpha_k \in [0,1]$ 
such that $$(\nabla f)(k) = \alpha_k 
(\nabla^l f)(k) + (1-\alpha_k) (\nabla^r f)(k).$$ 
\end{defi}

In other words, a derivative is defined by a family of coefficients 
$(\alpha_k \in [0,1])$, for $k \in \Z$. Each of these coefficients tells us 
how to mix, at a given point $k$, left and right derivatives. For example, 
the left (resp. right) derivative 
corresponds to the case where all the coefficients are equal to $1$ (resp. $0$).

First we show that a spatial derivative on $\Z$ for which there 
exists a fundamental solution to the $n$-translation problem must follow 
some necessary conditions. We next show that these necessary conditions allow 
us to reduce the translation problem to a more understandable problem of 
linear algebra in finite dimensions.

\begin{prop}\label{AlphaNecessaire}
Fix integer $n\geq 1$ and a derivative $\nabla$ on $\Z$ defined by 
a family of coefficients $(\alpha_k)_{k \in \Z}$. If there 
exists a solution $\mu_t$ to the $n$-translation problem 
(\ref{eq:initial}), (\ref{EqTranslation}) 
associated with 
$\nabla$ then $\alpha_0 = 0$ and $\alpha_n =1$.
Moreover, the support of $\mu_t$ is contained in $\{0,\ldots n\}$.
\end{prop}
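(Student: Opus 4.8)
The plan is to treat the defining identity (\ref{EqTranslation}) as a device that produces scalar ordinary differential equations when one substitutes convenient test functions $f$, and then to play these against the positivity of the measures $\mu_t$ and the boundary data (\ref{eq:initial}). The first step is the reduction to an ODE system: applying (\ref{EqTranslation}) with $f=\delta_j$ (the indicator of the single point $j$), and noting that $(\nabla\delta_j)(j-1)=1-\alpha_{j-1}$, $(\nabla\delta_j)(j)=2\alpha_j-1$, $(\nabla\delta_j)(j+1)=-\alpha_{j+1}$ and $(\nabla\delta_j)(k)=0$ otherwise, one finds, for every $j\in\Z$,
\[
\frac{d}{dt}\mu_t(j)=n\bigl[(1-\alpha_{j-1})\mu_t(j-1)+(2\alpha_j-1)\mu_t(j)-\alpha_{j+1}\mu_t(j+1)\bigr].
\]
Equivalently, (\ref{EqTranslation}) says $\dot\mu_t=n\,\nabla^{*}\mu_t$, where $\nabla^{*}$ is the adjoint of $\nabla$ with respect to counting measure.

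To get $\alpha_0=0$, take $j=-1$ above and evaluate at $t=0$: since $\mu_0=\delta_0$, every term on the right vanishes except $-\alpha_0\mu_0(0)=-\alpha_0$, so the right derivative of $t\mapsto\mu_t(-1)$ at $0$ equals $-n\alpha_0$. But $\mu_t(-1)\ge 0=\mu_0(-1)$ throughout $[0,1]$, so $t=0$ is a minimum and that right derivative is $\ge 0$; as $\alpha_0\ge 0$ as well, $\alpha_0=0$. Symmetrically, taking $j=n+1$ and evaluating at $t=1$ (where $\mu_1=\delta_n$), the left derivative of $t\mapsto\mu_t(n+1)$ at $1$ equals $n(1-\alpha_n)$, which must be $\le 0$ since $t=1$ is a minimum; with $\alpha_n\le 1$, this gives $\alpha_n=1$.

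For the support statement, put $S_t=\sum_{j\le -1}\mu_t(j)$ and $T_t=\sum_{j\ge n+1}\mu_t(j)$, quantities obtained by testing (\ref{EqTranslation}) against the bounded functions $\mathbf{1}_{\{k\le -1\}}$ and $\mathbf{1}_{\{k\ge n+1\}}$ (if one only wishes to invoke (\ref{EqTranslation}) for finitely supported $f$, apply it to the truncations $\mathbf{1}_{\{-N\le k\le -1\}}$, and so on, and let $N\to\infty$; the extra boundary terms are tails of convergent series and disappear). Summing the equations of the previous paragraph over $j\le -1$, resp. $j\ge n+1$, and telescoping — the key cancellation being that $(1-\alpha_j)+(2\alpha_j-1)-\alpha_j=0$ at each interior index — and then using $\alpha_0=0$, resp. $\alpha_n=1$, collapses the right-hand side to
\[
\frac{d}{dt}S_t=-n(1-\alpha_{-1})\mu_t(-1)\le 0,\qquad \frac{d}{dt}T_t=n\,\alpha_{n+1}\mu_t(n+1)\ge 0.
\]
Since $S_0=0$, $S_t\ge 0$ and $S$ is non-increasing, $S_t\equiv 0$ on $[0,1]$; since $T_1=0$, $T_t\ge 0$ and $T$ is non-decreasing, $T_t\equiv 0$ on $[0,1]$. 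Hence $\mu_t(j)=0$ for every $j\notin\{0,\dots,n\}$ and every $t$, as required.

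The only genuine difficulty I anticipate is in this last step: executing the telescoping cancellation cleanly, and the mild analytic point of interchanging $\frac{d}{dt}$ with the infinite sum — equivalently, of applying (\ref{EqTranslation}) to a bounded rather than a finitely supported test function — which the truncation argument handles. The earlier steps are routine. One could also combine matters: the bound $\frac{d}{dt}S_t\le 0$ holds before one knows the value of $\alpha_0$, so it already forces $S_t\equiv 0$, hence $\mu_t(-1)\equiv 0$, after which the full formula for $\frac{d}{dt}S_t$ gives $\alpha_0\mu_t(0)\equiv 0$; evaluating at $t=0$ where $\mu_0(0)=1$ recovers $\alpha_0=0$, and likewise $\alpha_n=1$.
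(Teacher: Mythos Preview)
Your proof is correct and follows essentially the same idea as the paper's: test the translation identity against cumulative indicator functions (equivalently, sum your pointwise ODEs over a half-line) and exploit positivity together with the boundary data $\mu_0=\delta_0$, $\mu_1=\delta_n$. The paper works directly with $g(t)=\sum_{k\ge 0}\mu_t(k)$ and its mirror $\sum_{k\le n}\mu_t(k)$, deducing $\alpha_0=0$ and the support statement simultaneously from $g(0)=g(1)=1$ with $g'\ge 0$---this is exactly your final-paragraph variant, and your separate local-minimum argument at $t=0$ for $\alpha_0=0$ is a small shortcut rather than a genuinely different route.
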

\begin{proof}
 Let us first consider the function $f : \Z \rightarrow \re$ defined by 
$f(k)=0$ if $k<0$, and $f(k)=1$ if $k \geq 0$. It is easy to show that 
$(\nabla f)(-1) = 1-\alpha_{-1}$, $(\nabla f)(0) = \alpha_0$, and 
$(\nabla f)(k)=0$ elsewhere. \\
Let us now define the function $g : [0,1] \rightarrow \re$ by 
$$g(t):=\sum_{k \in \Z} f(k) \mu_t(k) := \sum_{k \geq 0} \mu_t(k).$$ 
The initial and final conditions satisfied by $\mu_t$ show that $g(0)=1=g(1)$. 
On the other hand, the Equation~\eqref{EqTranslation2} shows that 
$$g'(t) = n \sum_{k \in \Z} \mu_t(k) \nabla f(k) = n [(1-\alpha_{-1})\mu_t(-1)
+\alpha_0 \mu_t(0)].$$ 
In particular $g'(t) \geq 0$. The fact that $g(0)=g(1)$ thus implies 
that $g'(t)=0$ for every $t\in[0,1]$, and the condition $g'(0)=0$ can be 
written $\alpha_0=0$. Moreover, the fact that $g(t)=1$ for every $t\in [0,1]$
 implies $$\sum_{k \geq 0} \mu_t(k) = 1, $$ so $\mu_t$ is supported on $\Z_+$.

If we apply the same arguments to the function $f$ defined by $f(k)=1$ 
if $k\leq n$, and $f(k)=0$ if $k>n$, we find that $\alpha_n=1$, and that 
$\mu_t$ is supported on $\{k \in \Z \ | \ k\leq n\}$. 
\end{proof}

An interesting consequence of Proposition~\ref{AlphaNecessaire} is that 
the translation problem of Equation (\ref{EqTranslation}) can be 
restricted to $\mu_t$ supported on $[0,n]$. That is, we can replace (\ref{EqTranslation})
by 
\begin{equation}\label{EqTransla} 
\frac{\partial}{\partial t} \sum_{k=0}^n    f(k) \mu_t(k)
= n \sum_{k=0}^n  \nabla f(k) \mu_t(k).\end{equation}

Now, let us consider the canonical basis $\mathcal{CB} := (e_0,\ldots e_n)$ of 
the linear space of functions $\{0,\ldots, n\} \rightarrow \re$. Let $X(t)$ 
be the column vector representing $\mu_t$ in $\mathcal{CB}$ 
(probability measures are canonically identified with functions), ie for 
every $k \in \{0,\ldots n\}$, $(X(t))_k := \mu_t(k)$. The initial (resp. final) 
condition $\mu_0 =\delta_0$ (resp. $\mu_1=\delta_n$) is equivalent to $X(0)=e_0$ 
(resp. $X(1)=e_n$). Moreover, Equation~\eqref{EqTransla} is equivalent to the
fact that for all vectors  $ Y \in M_{n,1}(\re)$
\begin{equation} \langle X'(t),Y \rangle = \frac{\partial}{\partial t} \langle X(t), Y
\rangle = 
 n \langle X(t) , \nabla  \rangle = n \langle \nabla^* X(t), Y \rangle, \end{equation} 
where $\langle .,. \rangle$ is the usual (unweighted) scalar product on column vectors, and
where $\nabla^*$ represents the adjoint with respect to this scalar product.
This allows us to deduce that
\begin{equation}\label{EqTranslaMatrice} X'(t) = n \nabla^* X(t),\end{equation} 
and basic theorems on first-order linear differential systems thus allow us to 
write the $n$-translation problem:

\begin{thm}
Let $n \geq 1$ be an integer, and $\nabla$ be a derivative on $\Z$, with 
$\alpha_0=0$ and $\alpha_n=1$. Let $A_\nabla$ be the matrix associated with
$\nabla$ and $n$. There exists a fundamental solution to the $n$-translation 
problem associated with $\nabla$ if and only if, for every $t\in[0,1]$, the 
column matrix $$X(t):=\exp(ntA_\nabla) e_0$$ has all its coefficients 
non-negative, and satisfies the final condition 
\begin{equation}\label{ExpCond}X(1)=e_n.\end{equation}
The fundamental solution $\mu_t(k)$ is then given by $\mu_t(k)=(X(t))_k$.
\end{thm}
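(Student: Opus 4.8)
The plan is to translate the $n$-translation problem into the language of the linear ODE system $X'(t) = n\nabla^* X(t)$ established in Equation~\eqref{EqTranslaMatrice}, and then invoke the standard theory of constant-coefficient linear differential equations. First I would note that, by Proposition~\ref{AlphaNecessaire}, any spatial derivative $\nabla$ admitting a solution to the $n$-translation problem necessarily has $\alpha_0 = 0$ and $\alpha_n = 1$ and gives a solution supported on $\{0,\ldots,n\}$; so the hypothesis of the theorem is exactly what is needed to make the finite-dimensional reduction legitimate. Under this hypothesis the operator $\nabla$ restricted to functions on $\{0,\ldots,n\}$ is represented by a well-defined $(n+1)\times(n+1)$ matrix $A_\nabla$ (this is the matrix referred to in the statement), and the translation equation~\eqref{EqTransla} becomes precisely the matrix ODE $X'(t) = n\nabla^* X(t)$ with initial condition $X(0) = e_0$.

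The core observation is then that the initial value problem $X'(t) = nA_\nabla^{*} X(t)$, $X(0) = e_0$ has a \emph{unique} solution on $[0,1]$, given explicitly by $X(t) = \exp(nt A_\nabla^{*})e_0$ — here I am reading $A_\nabla$ in the statement as already the adjoint matrix $\nabla^*$, or equivalently identifying the two via the unweighted scalar product; I would make that identification explicit at the start of the proof to match the notation of~\eqref{EqTranslaMatrice}. Existence and uniqueness is the Picard–Lindelöf theorem for linear systems (the right-hand side is globally Lipschitz), and the matrix-exponential formula is the standard closed form. Consequently, a function $\mu_t$ supported on $\{0,\ldots,n\}$ solves the $n$-translation problem if and only if its coordinate vector $X(t)$ equals $\exp(ntA_\nabla)e_0$; this candidate automatically satisfies the initial condition $X(0) = e_0$, and it is a genuine solution of the translation problem precisely when two further conditions hold: (i) every coordinate $(X(t))_k$ is non-negative for all $t \in [0,1]$, so that each $\mu_t$ is a sub-probability measure — and in fact, since the constant function $f\equiv 1$ has $\nabla f \equiv 0$, the total mass $\sum_k (X(t))_k$ is conserved and equals $1$, so non-negativity upgrades $\mu_t$ to an honest probability measure; and (ii) the final condition $X(1) = e_n$ of Equation~\eqref{ExpCond}, which encodes $\mu_1 = \delta_n$.

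Assembling these pieces: the "only if" direction is Proposition~\ref{AlphaNecessaire} together with uniqueness of solutions to the ODE (any solution must coincide with the matrix exponential, hence must satisfy (i) and (ii)); the "if" direction is the verification that $X(t) = \exp(ntA_\nabla)e_0$, when it satisfies (i) and (ii), does define a family of probability measures obeying~\eqref{EqTransla} and both boundary conditions. I do not expect any serious obstacle here — the theorem is essentially a bookkeeping statement repackaging the preceding reduction — but the one point requiring care is the bidirectional equivalence between the distributional/test-function formulation~\eqref{EqTransla} and the matrix formulation, i.e.\ confirming that testing against all $Y \in M_{n,1}(\re)$ is the same as testing against all $f:\{0,\ldots,n\}\to\re$, and keeping the adjoint $\nabla^*$ versus $\nabla$ straight throughout. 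That, and flagging that "fundamental solution" here means the interpolating family started from $\delta_0$, are the only things I would be careful to spell out.
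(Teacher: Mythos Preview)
Your proposal is correct and follows exactly the paper's approach: the paper does not give a separate formal proof of this theorem but simply states, after deriving Equation~\eqref{EqTranslaMatrice}, that ``basic theorems on first-order linear differential systems thus allow us to write the $n$-translation problem'' in the form given. Your write-up spells out precisely those basic theorems (Picard--Lindel\"of uniqueness and the matrix-exponential formula), together with the mass-conservation remark and the care about $\nabla$ versus $\nabla^*$, which is a faithful and slightly more detailed rendering of what the paper leaves implicit.
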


We prove Theorem \ref{thm:translate} using the properties of the 
spatial derivative $\nabla_n$ introduced in Definition \ref{def:nabla}. 
In this case we can be explicit about the form of $\nabla^*_n$, and introduce
a further map $\tn$ which will be used
 to prove Theorem \ref{thm:translate} and the  Poincar\'{e} inequality Theorem \ref{PoincBino}.
\begin{defi} \label{def:furthermaps} \mbox{ }
\begin{enumerate}
\item
Let $\nabla^*_n$ be the adjoint operator of $\nabla_n$ for the unweighted scalar 
product on $l^2(\{0,\ldots n\})$. We have the formula 
$$\nabla^*_ng(k) = \frac{1}{n} \left( (n-k+1) g(k-1)-(n-2k)g(k)-(k+1)g(k+1) 
\right),$$ 
where $g(-1)=g(n+1)=0$.
\item
We now fix $t \in (0,1)$.
 Let $\tn$ be the adjoint operator of $\nabla_n$ for the scalar 
product with respect to the binomial law $b_{n,t}$ 
(taking $t \notin \{0,1\}$ ensures that it is truly a scalar product 
on the space of functions $\{0,\ldots n\} \rightarrow \re$)). 
We have:
\begin{eqnarray}
\tn f (k) &=& \frac{1}{b_{n,t}(k)} \nabla_n^*(f(k)b_{n,t}(k)) \nonumber \\
&=& \frac{n-k+1}{n} \frac{b_{n,t}(k-1)}{b_{n,t}(k)} f(k-1)-\frac{n-2k}{n}f(k)
-\frac{k+1}{n}\frac{b_{n,t}(k+1)}{b_{n,t}(k)}f(k+1) \nonumber \\
& = & \frac{k}{n} \frac{1-t}{t} f(k-1)-\frac{n-2k}{n} f(k) 
- \frac{n-k}{n} \frac{t}{1-t} f(k+1). \label{TildeNablaDef}
\end{eqnarray}
\end{enumerate}
\end{defi}
The equivalence of the last two results follows since for all $k$,
\begin{eqnarray*}
\frac{b_{n,t}(k-1)}{b_{n,t}(k)} 
&=& \frac{k}{n-k+1} \frac{1-t}{t}.
\end{eqnarray*}
We can relate properties of $\tn$ and $\nabla^*_n$ using conjugation by
the linear operator $D$ that maps any function 
$f : \{0,\ldots, n\} \rightarrow \re$ to the function $Df$ 
defined by $$\forall k \in \{0,\ldots, n\}, \ Df(k)=b_{n,t}(k)f(k).$$ 
Moreover, as $t \in (0,1)$, $D$ is invertible and 
$$\forall k \in \{0,\ldots, n\}, \ D^{-1}f(k) = \frac{1}{b_{n,t}(k)}f(k).$$ 
This operator is useful to give a very simple relation between $\nabla^*_n$ and
 $\tn$: 
\begin{equation} \label{eq:conjugate} \tn = D^{-1} 
\circ \nabla^*_n \circ D.\end{equation}

\begin{proof}[Proof of Theorem \ref{thm:translate}]
We simply verify that  (\ref{EqTranslaMatrice}) holds taking 
$X(t) = b_{n,t}(k)$ and $\nabla^*$ in the form given by
Definition \ref{def:furthermaps}. We observe that in this
case both sides of (\ref{EqTranslaMatrice}) have $k$th component
equal to $b_{n,t}(k) \left( k/t - (n-k)/(1-t) \right)$. The fact that $\frac{\partial}{\partial t}
b_{n,t}(k)$ takes this form is immediate, and the corresponding result for
the right hand side follows by Equations (\ref{TildeNablaDef}) and (\ref{eq:conjugate}) since
$n \frac{1}{b_{n,t}(k)} \nabla^*_n b_{n,t}(k) = n
\tn \vc{1} = k/t - (n-k)/(1-t)$, where $\vc{1}$ denotes the function which is identically 1. 
 \end{proof}

\section{Proof of the Poincar\'{e} inequality} \label{sec:poincare}

From now on, we fix an integer $n \geq 1$, and we denote by 
$\nabla_n$ the finite difference operator of Definition \ref{def:nabla}. 
We recall the definition of the Krawtchouk polynomials from \cite{szego}.
\begin{thm} \label{thm:krawt}
There exists a basis of polynomials in $k$, 
denoted $\phi_0,\ldots, \phi_n$, ``laddered'' 
(i.e. with $deg(\phi_r)=r$), and such that 
\begin{equation} \label{eq:krawtnorm}
\sum_{k=0}^n \phi_r(k) \phi_{s}(k) b_{n,t}(k) = \frac{n! r!}{(n-r)!} \left( \frac{t}{1-t} \right)^r \delta_{rs}
:= C_{n,r} \delta_{rs}.
\end{equation}
This family of polynomials is uniquely determined by the generating function in
 $w$  \begin{equation}
\label{GenerYoung}
P(k,w) := \sum_{r=0}^n \frac{(1-t)^r}{r!} \phi_r(k) w^r = (1+(1-t)w)^k(1-tw)^{n-k}.
\end{equation}
\end{thm}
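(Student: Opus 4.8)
The plan is to take the generating-function identity \eqref{GenerYoung} as the \emph{definition} of the $\phi_r$ and then check, one property at a time, that they are laddered, that they satisfy the orthogonality relation \eqref{eq:krawtnorm}, and that those two facts characterise them. For each fixed $k\in\{0,\ldots,n\}$ the map $w\mapsto(1+(1-t)w)^k(1-tw)^{n-k}$ is a polynomial in $w$ of degree exactly $n$; writing it as $\sum_{r=0}^n a_r(k)\,w^r$ and setting $\phi_r(k):=\frac{r!}{(1-t)^r}\,a_r(k)$ gives $n+1$ well-defined functions of $k$, and it remains to identify them.

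\textbf{Step 1 (laddered basis).} Expanding both factors by the binomial theorem, $a_r(k)=\sum_{i+j=r}\binom{k}{j}\binom{n-k}{i}(1-t)^j(-t)^i$. Since $\binom{k}{j}$ and $\binom{n-k}{i}$ are polynomials in $k$ of degrees $j$ and $i$, $a_r(k)$ is a polynomial in $k$ of degree at most $r$, and its coefficient of $k^r$ equals $\sum_{i+j=r}\frac{(1-t)^j}{j!}\cdot\frac{(-1)^i(-t)^i}{i!}=\frac1{r!}\sum_{i+j=r}\binom{r}{j}(1-t)^j t^i=\frac1{r!}\neq0$. Hence $\deg\phi_r=r$ with leading coefficient $1/(1-t)^r$, so $\phi_0,\ldots,\phi_n$ is a laddered basis of the polynomials of degree at most $n$; a direct check recovers $\phi_1(k)=\frac1{1-t}(k-nt)$.

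\textbf{Step 2 (orthogonality --- the main step).} I would compute $\sum_{k=0}^n P(k,w)P(k,v)\,b_{n,t}(k)$ in two ways. Collecting the $k$-dependent factors and using the binomial theorem,
\[
\sum_{k=0}^n P(k,w)P(k,v)\,b_{n,t}(k)=\sum_{k=0}^n\binom{n}{k}U^kV^{n-k}=(U+V)^n,
\]
with $U=t(1+(1-t)w)(1+(1-t)v)$ and $V=(1-t)(1-tw)(1-tv)$. Expanding $U+V$, the constant terms give $t+(1-t)=1$, the coefficients of $w$ and of $v$ are each $t(1-t)-t(1-t)=0$, and the coefficient of $wv$ is $t(1-t)^2+t^2(1-t)=t(1-t)$; hence $U+V=1+t(1-t)wv$ and
\[
\sum_{k=0}^n P(k,w)P(k,v)\,b_{n,t}(k)=\bigl(1+t(1-t)wv\bigr)^n=\sum_{r=0}^n\binom{n}{r}\bigl(t(1-t)\bigr)^r(wv)^r.
\]
On the other hand the definition of $P$ rewrites the same sum as $\sum_{r,s}\frac{(1-t)^{r+s}}{r!\,s!}w^rv^s\sum_{k=0}^n\phi_r(k)\phi_s(k)b_{n,t}(k)$. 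Matching coefficients of $w^rv^s$: the terms with $r\neq s$ must vanish, which is the orthogonality, and for $r=s$ we get $\sum_{k=0}^n\phi_r(k)^2b_{n,t}(k)=\binom{n}{r}(r!)^2t^r(1-t)^{-r}=\frac{n!\,r!}{(n-r)!}\bigl(\tfrac{t}{1-t}\bigr)^r=C_{n,r}$, which is \eqref{eq:krawtnorm}.

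\textbf{Step 3 (uniqueness).} Because $b_{n,t}$ is a strictly positive weight on the $n+1$ points $\{0,\ldots,n\}$, Gram--Schmidt applied to $1,k,\ldots,k^n$ yields a laddered orthogonal family, unique up to a nonzero scalar in each degree; the normalisation \eqref{eq:krawtnorm} fixes each scalar up to sign, and the sign is pinned down by requiring the leading coefficient $1/(1-t)^r>0$ obtained in Step~1. Thus the family is unique and equals the one read off \eqref{GenerYoung}. The one delicate point is the vanishing of the linear-in-$w$ and linear-in-$v$ cross terms in $U+V$ in Step~2; everything else is bookkeeping with the binomial theorem.
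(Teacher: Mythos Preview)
Your proof is correct and complete. The generating-function bilinear computation in Step~2 is the standard clean way to get the Krawtchouk orthogonality relations, and your Step~1 degree count and Step~3 uniqueness discussion are accurate; the only cosmetic point is that the theorem's phrase ``uniquely determined by the generating function'' is really just saying that \eqref{GenerYoung} is the defining formula that pins down the sign left over after the norm condition \eqref{eq:krawtnorm}, exactly as you observe.

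The paper itself does not prove Theorem~\ref{thm:krawt} at all: it is stated as a classical fact and attributed to Szeg\H{o}'s monograph via ``We recall the definition of the Krawtchouk polynomials from \cite{szego}.'' So you have supplied a self-contained argument where the paper simply quotes the literature. Your bilinear-generating-function trick $(U+V)^n=(1+t(1-t)wv)^n$ is in fact the standard textbook derivation, so in spirit you are reproducing what the citation points to rather than taking a different route.
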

The discrete derivatives in $k$ of $P(k,w)$ can be obtained by using the 
formulas 
\begin{eqnarray} P(k-1,w) & = & P(k,w) \frac{1-tw}{1+(1-t)w}
\mbox{ for all $k \geq 1$} \label{eq:discA} \\
P(k+1,w) & = & P(k,w)\frac{1+(1-t)w}{1-tw} \mbox{ for all $k \leq n-1$}
\label{eq:discB}
\end{eqnarray}
%
Finally, since $\frac{\partial}{\partial w} w^r=rw^{r-1}$, we obtain 
\begin{equation} \label{eq:discD}
\sum_{r=0}^n \frac{(1-t)^r}{r!} r \phi_r(k) w^r =
 w \frac{\partial}{\partial w} P(k,w)
= w P(k,w)  
\left(\frac{(1-t)k}{1+(1-t)w}-\frac{t(n-k)}{1-tw}\right).\end{equation}
Notice that $\phi_0$ is the function identically equal to $1$, and so 
$\nabla_n \phi_0 = 0$, 
which gives a sense to Proposition \ref{prop:ladder} when $r=0$. 
To simplify the proof, we will define $\phi_{-1}= \phi_{n+1} = 0$.
\begin{prop} \label{prop:ladder} 
For every $r \in \{0,\ldots, n \}$, we have
\begin{enumerate}
\item \label{NablaPhi}
The operator $\nabla_n$ maps $\phi_r$ to a multiple of $\phi_{r-1}$: 
$ \begin{displaystyle}
\nabla_n \phi_r = \frac{r(n-r+1)}{n(1-t)} \phi_{r-1}.\end{displaystyle}
$
\item
\label{TildeNablaPhi}
The operator $\tn$ maps $\phi_r$ to a multiple of $\phi_{r+1}$:
$ \begin{displaystyle} \tn \phi_r = \frac{1}{nt} \phi_{r+1}.
\end{displaystyle}$ 
\item \label{PhiVP} The Krawtchouk
polynomials are eigenfunctions for the linear map 
$\left( \tn \circ \nabla_n \right)$: 
$$ \left( \tn \circ \nabla_n \right) \phi_r 
= \frac{r(n-r+1)}{n^2t(1-t)} \phi_r.$$
\end{enumerate}
\end{prop}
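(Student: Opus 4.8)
The plan is to prove the three parts in order, deducing part (3) immediately from parts (1) and (2), so the real work is in establishing the two ladder relations. For part (1), I would work with the generating function $P(k,w) = (1+(1-t)w)^k(1-tw)^{n-k}$ from Theorem~\ref{thm:krawt}. Applying $\nabla_n$ in the variable $k$ to $P(k,w)$ and using the shift identities \eqref{eq:discA} and \eqref{eq:discB}, one computes
$$
(\nabla_n P)(k,w) = \frac{k}{n}\left(1 - \frac{1-tw}{1+(1-t)w}\right)P(k,w) + \frac{n-k}{n}\left(\frac{1+(1-t)w}{1-tw} - 1\right)P(k,w),
$$
and both bracketed factors simplify (numerator of the first is $w$, of the second is $w$ as well, after clearing denominators) so that this collapses to $w\,P(k,w)\left(\frac{(1-t)k}{1+(1-t)w} - \frac{t(n-k)}{1-tw}\right)\cdot\frac{1}{n}$ times a constant — which is exactly $\frac{1}{n}$ times the right-hand side of \eqref{eq:discD}. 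Comparing the coefficient of $w^r$ on both sides of the resulting identity $\nabla_n P(k,w) = \frac{1}{n}\, w\frac{\partial}{\partial w}P(k,w)$, the left side contributes $\frac{(1-t)^r}{r!}(\nabla_n\phi_r)(k)$ and the right side contributes $\frac{1}{n}\frac{(1-t)^r}{r!}\,r\,\phi_r(k)$; but the right side, re-expanded, must be matched against the expansion with $\phi_{r-1}$. Here one has to be slightly careful: rewriting $w\frac{\partial}{\partial w}P$ back in terms of the basis $\phi_s$ and matching powers of $w$ is what produces the factor $\frac{r(n-r+1)}{n(1-t)}$; I expect the bookkeeping of these coefficients to be the main obstacle, since it requires relating the coefficient of $w^r$ in $w\frac{\partial}{\partial w}P$ to $\phi_{r-1}$ rather than $\phi_r$, which forces a reindexing and brings in the combinatorial factor $\frac{(n-r+1)}{1-t}$ from the normalization constants $C_{n,r}$.

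For part (2), I would use the conjugation relation $\tn = D^{-1}\circ\nabla_n^*\circ D$ together with the adjointness of $\tn$ relative to the binomial scalar product. The cleanest route is to exploit orthogonality directly: since $\tn$ is the adjoint of $\nabla_n$ with respect to the weights $b_{n,t}$, we have for each $s$
$$
\sum_{k=0}^n (\tn\phi_r)(k)\,\phi_s(k)\,b_{n,t}(k) = \sum_{k=0}^n \phi_r(k)\,(\nabla_n\phi_s)(k)\,b_{n,t}(k) = \frac{s(n-s+1)}{n(1-t)}\sum_{k=0}^n \phi_r(k)\phi_{s-1}(k)\,b_{n,t}(k),
$$
using part (1). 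By \eqref{eq:krawtnorm} this vanishes unless $s = r+1$, so $\tn\phi_r$ is a scalar multiple of $\phi_{r+1}$; the scalar is then pinned down by taking $s = r+1$ and dividing by $C_{n,r+1}$, giving $\frac{r+1\cdot(n-r)}{n(1-t)}\cdot C_{n,r}/C_{n,r+1}$, which upon substituting $C_{n,r} = \frac{n!\,r!}{(n-r)!}\left(\frac{t}{1-t}\right)^r$ should reduce to $\frac{1}{nt}$. (One should also check the degree claim $\deg(\tn\phi_r) = r+1$, or rather that $\tn\phi_r$ is not identically zero, which follows since $\nabla_n$ is surjective onto polynomials of degree $\leq n-1$, or directly from the nonvanishing of the scalar.) An alternative, more computational proof of part (2) is to apply the explicit formula \eqref{TildeNablaDef} to $P(k,w)$ using \eqref{eq:discA}–\eqref{eq:discB} and match coefficients, analogous to part (1); I would include whichever is shorter.

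Finally, part (3) is immediate: composing, $(\tn\circ\nabla_n)\phi_r = \tn\left(\frac{r(n-r+1)}{n(1-t)}\phi_{r-1}\right) = \frac{r(n-r+1)}{n(1-t)}\cdot\frac{1}{nt}\phi_r = \frac{r(n-r+1)}{n^2t(1-t)}\phi_r$, which also correctly gives $0$ when $r=0$ since $\phi_{-1}=0$ and the eigenvalue vanishes. The only genuine subtlety across the whole argument is the coefficient-extraction in part (1); once that constant is correctly identified, parts (2) and (3) follow with essentially no further effort, and the Poincaré inequality Theorem~\ref{PoincBino} then follows by expanding an arbitrary mean-zero $f$ in the orthogonal basis $\{\phi_1,\ldots,\phi_n\}$ and observing that the smallest eigenvalue among $r \in \{1,\ldots,n\}$ of $\tn\circ\nabla_n$ is $\frac{1}{n^2t(1-t)}$, attained at $r=1$ and $r=n$, with equality in \eqref{eq:eigen} precisely when $f$ is supported on the corresponding eigenspaces.
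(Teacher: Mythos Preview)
Your approach to Part~\ref{TildeNablaPhi} via adjointness is correct and in fact cleaner than the paper's route: the paper computes $\tn P(k,w)$ directly from \eqref{TildeNablaDef}, \eqref{eq:discA}, \eqref{eq:discB} and matches it against $\frac{1}{nt(1-t)}\,\partial_w P$, whereas your argument needs only Part~\ref{NablaPhi}, the definition of the adjoint, and the ratio $C_{n,r}/C_{n,r+1} = \frac{1-t}{t(r+1)(n-r)}$. Part~\ref{PhiVP} is immediate either way.

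However, your Part~\ref{NablaPhi} contains a genuine error, not merely a bookkeeping hazard. The identity you assert, $\nabla_n P(k,w) = \frac{1}{n}\, w\,\partial_w P(k,w)$, is false. Your own starting expression simplifies correctly to
\[
\nabla_n P(k,w) = \frac{w}{n}\,P(k,w)\left(\frac{k}{1+(1-t)w}+\frac{n-k}{1-tw}\right),
\]
while by \eqref{eq:discD}
\[
\frac{1}{n}\,w\,\partial_w P(k,w) = \frac{w}{n}\,P(k,w)\left(\frac{(1-t)k}{1+(1-t)w}-\frac{t(n-k)}{1-tw}\right);
\]
the bracketed factors take the values $n$ and $k-nt$ at $w=0$, so no constant rescaling reconciles them. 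The correct generating-function identity is $\nabla_n P = wP - \tfrac{w^2}{n}\,\partial_w P$, which the paper obtains by first reindexing the target sum as
\[
\sum_{r} \frac{(1-t)^r}{r!}\,\frac{r(n-r+1)}{n(1-t)}\,\phi_{r-1}\,w^r \;=\; \frac{w}{n}\sum_{r} \frac{(1-t)^r}{r!}\,(n-r)\,\phi_r\, w^r
\]
and then applying \eqref{eq:discD}. Extracting the coefficient of $w^r$ from $wP - \tfrac{w^2}{n}\partial_w P$ yields $\frac{(1-t)^{r-1}}{(r-1)!}\cdot\frac{n-r+1}{n}\,\phi_{r-1}$, and dividing by $\frac{(1-t)^r}{r!}$ produces the constant $\frac{r(n-r+1)}{n(1-t)}$. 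Your claimed identity, by contrast, would give $\nabla_n\phi_r = \frac{r}{n}\phi_r$, which does not even lower the degree. Since your Part~\ref{TildeNablaPhi} argument rests on Part~\ref{NablaPhi}, this gap must be closed before the adjointness trick can be invoked.
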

Remark that these eigenvalues are not distinct, which does not allows us
to deduce directly that the family $(\phi_0,\ldots, \phi_n)$ is a basis of the 
space of 
functions $\{0,\ldots, n\} \rightarrow \re$. This fact comes from the 
orthogonality with respect to the binomial scalar product.

\begin{proof}[{\bf Proof of Proposition \ref{prop:ladder}}]
{\bf Part \ref{NablaPhi}:}
It suffices to check the polynomial identity 
$$\sum_{r=0}^n \frac{(1-t)^r}{r!} \nabla_n \phi_r(k) w^r
= \sum_{r=0}^n \frac{(1-t)^r}{r!} \frac{r(n-r+1)}{n(1-t)} \phi_{r-1}(k) w^r.$$
We will use the formula~\eqref{GenerYoung} to 
express both side of the last equation 
in terms of the polynomial $P(k,w)$. First, we have by Equations 
(\ref{eq:discA}) and (\ref{eq:discB}) that
\begin{eqnarray*}
\sum_{r=0}^n \frac{(1-t)^r}{r!} \nabla_n \phi_r(k) w^r &=& \nabla P(k,w) \\
&=& \frac{P(k,w)}{n} \left(k\left(1-\frac{P(k-1,w)}{P(k,w)}\right)
+(n-k)\left(\frac{P(k+1,w)}{P(k,w)}-1\right)\right) \\
&=& \frac{P(k,w)}{n} \left(k\left(1-\frac{1-tw}{1+(1-t)w}\right)
+(n-k)\left(\frac{1+(1-t)w}{1-tw}-1\right)\right) \\
&=& \frac{P(k,w)}{n}w \left(\frac{k}{1+(1-t)w}+\frac{n-k}{1-tw}\right).
\end{eqnarray*}
For the right hand side, we have using (\ref{eq:discD}) that
\begin{eqnarray*}
\sum_{r=0}^n \frac{(1-t)^r}{r!} \frac{r(n-r+1)}{n(1-t)} \phi_{r-1}(k) w^r 
&=& \frac{w}{n} \sum_{r=0}^n \frac{(1-t)^r}{r!} (n-r) \phi_{r}(k) w^r \\
&=& \frac{P(k,w)}{n}w \left(n-w\left(\frac{(1-t)k}{1+(1-t)w}-
\frac{t(n-k)}{1-tw}\right)\right) \\
&=& \frac{P(k,w)}{n}w \left(k \left(1-\frac{(1-t)w}{1+(1-t)w}\right)
+(n-k)\left(1+\frac{tw}{1-tw}\right) \right)\\
&=& \frac{P(k,w)}{n}w \left(\frac{k}{1+(1-t)w}+\frac{n-k}{1-tw}\right),
\end{eqnarray*}
which gives the desired result. 

{\bf Part \ref{TildeNablaPhi}:}
It suffices to check the polynomial identity 
$$\sum_{r=0}^n \frac{(1-t)^r}{r!} \tn \phi_r(k) w^r
= \sum_{r=0}^n \frac{(1-t)^r}{r!} \frac{1}{nt} \phi_{r+1}(k) w^r.$$
Let us begin by studying the right hand side. Using the convention 
$\phi_{n+1}=0$, we have by (\ref{eq:discD})
\begin{eqnarray*}
\sum_{r=0}^n \frac{(1-t)^r}{r!} \frac{1}{nt} \phi_{r+1}(k) w^r 
&=& \frac{1}{nt(1-t)w} 
\sum_{r=0}^n \frac{(1-t)^{r+1}}{(r+1)!}(r+1) \phi_{r+1}(k) w^{r+1} \\
&=& \frac{1}{nt(1-t)w} \sum_{r=0}^{n} \frac{(1-t)^r}{r!}r \phi_r(k) w^r \\
&=& \frac{1}{nt(1-t)w}w \frac{\partial}{\partial w} P(k,w) \\
&=& \frac{1}{nt(1-t)} P(k,w) \left(\frac{(1-t)k}{1+(1-t)w}-
\frac{t(n-k)}{1-tw}\right).
\end{eqnarray*}
The left hand side can be written 
$$\sum_{r=0}^n \frac{(1-t)^r}{r!} \tn \phi_r(k) w^r = \tn P(k,w),$$ 
and we calculate using (\ref{eq:discA}) and (\ref{eq:discB}) that
\begin{eqnarray*}
\tn P(k,w)&=& P(k,w) \left(\frac{k}{n} \frac{1-t}{t} \frac{P(k-1,w)}{P(k,w)}
-\frac{n-2k}{n} - \frac{n-k}{n} \frac{t}{1-t} \frac{P(k+1,w)}{P(k,w)}\right)\\
&=& \frac{P(k,w)}{nt(1-t)} \left(k(1-t)^2\frac{1-tw}{1+(1-t)w} - (n-2k)t(1-t)
 -(n-k)t^2\frac{1+(1-t)w}{1-tw}\right)\\
&=& \frac{P(k,w)}{nt(1-t)} \left((1-t)k \left(\frac{(1-t)(1-tw)}{1+(1-t)w}+t\right)
-t(n-k)\left(\frac{t(1+(1-t)w)}{1-tw}+(1-t)\right)\right)\\
&=& \frac{1}{nt(1-t)} P(k,w) \left(\frac{(1-t)k}{1+(1-t)w}-
\frac{t(n-k)}{1-tw}\right),
\end{eqnarray*}
and the proof is complete.

{\bf Part \ref{PhiVP}:} follows directly by combining the two previous results.
\end{proof}

Similarly,
there is another way to prove Part \ref{TildeNablaPhi} of Proposition \ref{prop:ladder},
 using the properties of the exponential of the operator $\nabla^*_n$:
\begin{proof}[{\bf Alternative proof of  Proposition \ref{prop:ladder}, Part \ref{TildeNablaPhi}.}]
\begin{equation}\label{ExpNablaStar}\forall t \in [0,1], \ 
\exp(nt \nabla^*_n)(e_0)= (b_{n,t}(0),\ldots, b_{n,t}(n))^T.\end{equation} 
The equation~\eqref{GenerYoung} allows us to show that 
the required result  is equivalent to 
\begin{equation}\label{GenerTn}\exp(nt(1-t)w \tn)(\phi_0) 
= (1+(1-t)w)^k(1-tw)^{n-k}.\end{equation}

As $\phi_0=(1,\ldots,1)^T$, the equation~\eqref{ExpNablaStar}:
\begin{eqnarray*}
D\phi_0 &=& (b_{n,t}(0),\ldots,b_{n,t}(n))^T \\
&=& \exp(nt \nabla^*)(e_0).
\end{eqnarray*}

\begin{eqnarray*}
\exp(nt(1-t)w \tn)(\phi_0) &=& D^{-1} \exp(nt(1-t)w \nabla^*) D\phi_0 \\
&=& D^{-1} \exp(nt(1-t)w \nabla^*) \exp(nt \nabla^*)(e_0) \\
&=& D^{-1} \exp(nt(1+(1-t)w) \nabla^*)(e_0).
\end{eqnarray*}
This means that, for every $k \in \{0,\ldots,n\}$:
\begin{eqnarray*}
\exp(nt(1-t)w \tn)(\phi_0)(k) &=& \frac{1}{b_{n,t}(k)} b_{n,t(1+(1-t)w)}(k) \\
&=& \left(\frac{t(1+(1-t)w)}{t}\right)^k 
 \left(\frac{1-t(1+(1-t)w)}{1-t}\right)^{n-k} \\
&=& (1+(1-t)w)^k(1-tw)^{n-k}.
\end{eqnarray*}
This proves the formula~\eqref{GenerTn}, and thus Part \ref{TildeNablaPhi} of
 Proposition~\ref{prop:ladder}. \end{proof}

We can complete the proof of Theorem \ref{PoincBino}, as follows:

\begin{proof}[{\bf Proof of Theorem \ref{PoincBino}}]
We can expand function $f(k) = \sum_{j=1}^n a_j \phi_j(k)$, since the assumption that
$\sum_{k=0}^n f(k) b_{n,t}(k)= 0$ ensures that $a_0 =0 $.  Using
 the normalization term $C_{n,r}$
from Equation (\ref{eq:krawtnorm}), and the adjoint $\tn$ of
Definition \ref{def:furthermaps}, we know that
 $$\left( \tn \circ \nabla_n \right) f = \sum_{j=1}^n a_j \left( \tn \circ \nabla_n \right) \phi_j
= \sum_{j=1}^n a_j \left( \frac{ j(n-j+1)}{n^2 t(1-t)} \right) \phi_j,$$
by  Part \ref{PhiVP} of Proposition \ref{prop:ladder}. This means that
can write the RHS of Equation (\ref{eq:eigen}) as 
\begin{eqnarray*}
n t(1-t) \sum_{k=0}^n b_{n,t}(k) f(k)  \left( \tn \circ \nabla_n \right) f(k)
& = &  n t(1-t) \sum_{j=1}^n a_j^2 \frac{ j(n-j+1)}{n^2 t(1-t)} C_{n,j} \nonumber  \\
& = & \sum_{j=1}^n a_j^2 \frac{j(n-j+1)}{n} C_{n,j}  \label{eq:RHS} \\
& \geq & \sum_{j=1}^n a_j^2 C_{n,j}
\end{eqnarray*}
which is the LHS of Equation (\ref{eq:eigen}).  The inequality follows since
 $j(n-j+1)/n \geq 1$ with
equality if and only if $j = 1$ or $j=n$.
\end{proof}

\bibliography{../bibliography/papers}
\end{document}